\documentclass[a4paper,11pt]{amsart}

\usepackage{amsmath, amsthm, amsfonts, amssymb}
\usepackage[bookmarks=false]{hyperref}
\usepackage{enumerate}

\usepackage{color} 

\numberwithin{equation}{section}

\setcounter{tocdepth}{1}

\setlength{\evensidemargin}{0pt}
\setlength{\oddsidemargin}{0pt}
\setlength{\topmargin}{-10pt}
\setlength{\textheight}{670pt}
\setlength{\textwidth}{450pt}
\setlength{\headsep}{20pt}
\setlength{\footskip}{30pt}
\setlength{\parindent}{0pt}
\setlength{\parskip}{1ex plus 0.5ex minus 0.2ex}

\newtheorem*{mainthm}{Main theorem}

\newtheorem{thm}{Theorem}[]
\newtheorem{lem}[thm]{Lemma}

\theoremstyle{definition}
\newtheorem{rem}[thm]{Remark}
\newtheorem*{rem*}{Remark}

\newtheorem{df}[thm]{Definition}

\newcommand{\R}{\mathbf{R}}
\newcommand{\C}{\mathbf{C}}

\newcommand{\N}{\mathbf{N}}

\newcommand{\cU}{\mathcal{U}}

\newcommand{\rL}{\mathord{\text{\rm L}}}
\newcommand{\HNN}{\mathord{\text{\rm HNN}}}
\newcommand{\rF}{\mathord{\text{\rm F}}}

\newcommand{\rC}{\mathord{\text{\rm C}}}

\newcommand{\rT}{\mathord{\text{\rm T}}}
\newcommand{\rS}{\mathord{\text{\rm S}}}
\newcommand{\rE}{\mathord{\text{\rm E}}}

\newcommand{\spn}{\mathord{\text{\rm span}}}

\newcommand{\diag}{\operatorname{diag}}

\newcommand{\cR}{\mathcal{R}}

\begin{document}

\title[Amenable absorption in amalgamated free products]{Amenable absorption in  amalgamated free product \\ von Neumann algebras}

\begin{abstract}
We investigate the position of amenable subalgebras in arbitrary amalgamated free product von Neumann algebras $M = M_1 \ast_B M_2$. Our main result states that under natural analytic assumptions, any amenable subalgebra of $M$ that has a large intersection with $M_1$ is actually contained in $M_1$. The proof does not rely on Popa's asymptotic orthogonality property but on the study of non normal conditional expectations. 
\end{abstract}

\author{R\'emi Boutonnet}
\address{Institut de Math\'ematiques de Bordeaux \\ CNRS \\ Universit\'e Bordeaux I \\ 33405 Talence \\ FRANCE}
\email{remi.boutonnet@math.u-bordeaux1.fr}

\author{Cyril Houdayer}
\address{Laboratoire de Math\'ematiques d'Orsay\\ Universit\'e Paris-Sud\\ CNRS\\ Universit\'e Paris-Saclay\\ 91405 Orsay\\ FRANCE}
\email{cyril.houdayer@math.u-psud.fr}
\thanks{Research supported by ERC Starting Grant GAN 637601}

\subjclass[2010]{46L10, 46L54}
\keywords{Amalgamated free product von Neumann algebras; Completely positive maps; Maximal amenable subalgebras; Popa's intertwining-by-bimodules theory}

\maketitle

\section*{Introduction}

In his breakthrough article \cite{Po83}, Popa introduced a powerful method based on asymptotic orthogonality in the ultraproduct framework to prove maximal amenability results in tracial von Neumann algebras. Notably, Popa showed in \cite{Po83} that the generator masa in any free group factor is maximal amenable, thus solving an open problem raised by Kadison. 

The question of proving maximal amenability results in von Neumann algebras has attracted a lot of interest over the last few years. Let us single out two recent results related to the present work. Houdayer--Ueda \cite{HU15} completely settled the question of maximal amenability of the inclusion $M_1 \subset M$ in arbitrary free product von Neumann algebras $(M, \varphi) = (M_1, \varphi_1) \ast (M_2, \varphi_2)$. Using a method based on the study of central states, Boutonnet--Carderi \cite{BC14} proved maximal amenability results in (tracial) von Neumann algebras arising from amalgamated free product groups, among other things. We refer the reader to \cite{BC14, HU15} and the references therein for other recent maximal amenability results.

In this paper, we use yet another method, inspired by \cite{BC14}, based on the study of non normal conditional expectations to prove maximal amenability results in arbitrary amalgamated free product von Neumann algebras. We say that an inclusion $P \subset N$ of von Neumann algebras is {\em with expectation} if there exists a faithful normal conditional expectation $\rE_P : N \to P$. We refer to Section~\ref{section-intertwining} for Popa's intertwining-by-bimodules in arbitrary von Neumann algebras.

\begin{mainthm}
For each $i \in \{1, 2\}$, let $B \subset M_i$ be any inclusion of arbitrary $\sigma$-finite von Neumann algebras with expectation. Denote by $M = M_1 \ast_B M_2$ the corresponding amalgamated free product von Neumann algebra. Let $Q \subset M$ be any von Neumann subalgebra with expectation satisfying the following two conditions:
\begin{enumerate}
\item [$(\rm i)$] $Q$ is amenable relative to $M_1$ inside $M$ (e.g.\ $Q$ is amenable).
\item [$(\rm ii)$] $Q \cap M_1 \subset M_1$ is with expectation and $Q \cap M_1 \npreceq_{M_1} B$.
\end{enumerate}
Then $Q \subset M_1$.
\end{mainthm}

We point out that the idea of \cite{BC14} was also adapted in \cite{Oz15} to prove the above theorem in the context of tracial free products. Our strategy follows a different path and is valid in the non-tracial setting and allows the presence of arbitrary amalgams.

Our main theorem completely settles the question of maximal amenability of the inclusion $M_1 \subset M$ in arbitrary amalgamated free product von Neumann algebras $M = M_1 \ast_B M_2$. Our result strengthens and recovers \cite[Corollary B]{HU15} (with a new and much simpler proof). It also generalizes a result obtained by Leary for certain tracial amalgamated free products  \cite{Le14}. A corollary to our main theorem is that if $B$ is of type ${\rm I}$ and $M_1$ has no type ${\rm I}$ direct summand or if $B$ is semifinite and $M_1$ is of type ${\rm III}$, then $M_1$ is maximal amenable (with expectation) inside $M$ whenever $M_1$ is amenable.

Let us give a few comments on the proof of our main theorem. If one tries to use Popa's central sequence approach \cite{Po83}, a key fact one has to show is that $Q$-central sequences in $M$ have no mass on the closed subspace $\mathcal K \subset \rL^2(M)$ spanned by all the reduced words in $M$ starting with a letter in $M_2 \ominus B$. In the setting of free products, this fact is proven by making $\mathcal K$ almost orthogonal to $u_n \mathcal K u_n^*$ where $u_n \in \mathcal U(Q \cap M_1)$ is a well chosen sequence of unitaries witnessing that $Q \cap M_1$ is diffuse. In the presence of the nontrivial amalgam $B$, this is no longer possible in general, even with the stronger assumption that $Q \cap M_1 \npreceq_{M_1} B$. So Popa's strategy via central sequences cannot work for arbitrary amalgamated free products.

To overcome this difficulty, we employ the central state formalism from \cite{BC14} which is better suited for analytic arguments. In the tracial setting, our proof boils down to showing that any $Q$-central state on $\mathbf B(\rL^2(M))$ vanishes on the orthogonal projection $P_{\mathcal K} : \rL^2(M) \to \mathcal K$ (where $\mathcal K$ is as above). To do this, we use a key vanishing type result for central states due to Ozawa--Popa \cite[Lemma 3.3]{OP08}, whose proof relies on $\rC^*$-algebraic techniques.

To run the argument for arbitrary amalgamated free products, we work with conditional expectations $\Phi: \mathbf B(\rL^2(M)) \to Q$ rather than $Q$-central states. We prove a characterization of Popa's intertwining-by-bimodules \cite{Po01, Po03} for arbitrary von Neumann algebras in terms of bimodular completely positive maps (see Theorem \ref{thm-intertwining} below, whose proof relies on a combination of results from \cite{Ha77a, Ha77b, HI15, OP07}).

\subsection*{Acknowledgments} The authors are grateful to the Hausdorff Research Institute for Mathematics (HIM) in Bonn for their kind hospitality during the program {\em von Neumann algebras} where this paper was completed. The second named author thanks Amine Marrakchi for pointing out to him the references \cite{Ha77a, Ha77b} regarding Theorem \ref{thm-intertwining}. The authors also thank Pierre Fima, Fran\c{c}ois Le Ma\^itre and Todor Tsankov for interesting remarks on this work and  the anonymous referee for providing useful comments.

\section{Preliminaries}


For any von Neumann algebra $M$, we denote by $(M, \rL^2(M), J, \rL^2(M)_+)$ its standard form, by $\mathcal Z(M)$ its center and by $\mathcal U(M)$ its group of unitaries The standard Hilbert space $\rL^2(M)$ has a natural structure of $M$-$M$-bimodule and we simply write $x\xi y := xJy^*J \xi$ for all $\xi \in \rL^2(M)$ and all $x, y \in M$. For any faithful state $\varphi \in M_\ast$, denote by $\xi_\varphi \in \rL^2(M)_+$ the unique canonical vector implementing $\varphi \in M_\ast$. Write $\|x\|_\varphi = \|x \xi_\varphi\|_{\rL^2(M)}$ for every $x \in M$. For any projection $p \in M$, denote by $z_M(p) \in \mathcal Z(M)$ its central support in $M$.

\subsection*{Amalgamated free product von Neumann algebras}

For each $i \in \{ 1, 2 \}$, let $B \subset M_i$ be any inclusion of $\sigma$-finite von Neumann algebras with faithful normal conditional expectation $\rE_i : M_i \to B$. The {\em amalgamated free product} $(M, \rE) = (M_1, \rE_1) \ast_B (M_2, \rE_2)$ is a pair of von Neumann algebra $M$ generated by $M_1$ and $M_2$ and a faithful normal conditional expectation $\rE : M \to B$ such that $M_1, M_2$ are {\em freely independent} with respect to $\rE$:
$$\rE(x_1 \cdots x_n) = 0 \quad \text{whenever } \; x_j \in M_{i_j}^\circ \; \text{ and } \; i_1 \neq \cdots \neq  i_{n}.$$
Here and in what follows, we denote by $M_i^\circ := \ker(\rE_i)$. We refer to the product $x_1 \cdots x_n$ where $x_j \in M_{i_j}^\circ$ and $i_1 \neq \cdots \neq i_{n}$ as a {\em reduced word} in $M_{i_1}^\circ \cdots M_{i_n}^\circ$ of {\em length} $n \geq 1$. The linear span of $B$ and of all the reduced words in $M_{i_1}^\circ \cdots M_{i_n}^\circ$ where $n \geq 1$ and $i_1 \neq \cdots \neq i_{n}$ forms a unital $\sigma$-strongly dense $\ast$-subalgebra of $M$. We call the resulting $M$ the \emph{amalgamated free product von Neumann algebra} of $(M_1,\rE_1)$ and $(M_2,\rE_2)$. 

Let $\varphi \in B_\ast$ be any faithful state. Then for all $t \in \R$, we have $\sigma_t^{\varphi \circ \rE} = \sigma_t^{\varphi \circ \rE_1} \ast_B \sigma_t^{\varphi \circ \rE_2}$ (see \cite[Theorem 2.6]{Ue98}). By \cite[Theorem IX.4.2]{Ta03}, for every $i \in \{1, 2\}$, there exists a unique $(\varphi\circ \rE)$-preserving conditional expectation $\rE_{M_i} : M \to M_i$. Moreover, we have $\rE_{M_i}(x_1 \cdots x_n) = 0$ for all the reduced words $x_1 \cdots x_n$ that contain at least one letter from $M_j^\circ$ for some $j \neq i$ (see e.g.\ \cite[Lemma 2.1]{Ue10}). We will denote by $M \ominus M_i := \ker (\rE_{M_i})$. For more information on amalgamated free product von Neumann algebras, we refer the reader to \cite{Ue98, VDN92}.

\subsection*{Relative amenability}

Let $M$ be any von Neumann algebra and $P, Q \subset M$ any von Neumann subalgebras with  expectation. Denote by $\langle M, Q\rangle := (JQJ)' \subset \mathbf B(\rL^2(M))$ the {\em Jones basic construction} of the inclusion $Q \subset M$. Following \cite[Theorem 2.1]{OP07}, we say that $P$ {\em is amenable relative to} $Q$ {\em inside} $M$ if there exists a conditional expectation $\Phi : \langle M, Q\rangle \to P$ such that $\Phi |_M$ is faithful and normal.

Observe that if $P \subset M$ is with expectation and amenable (hence {\em injective}), then by Arveson's extension theorem, $P$ is amenable relative to any von Neumann subalgebra with expectation $Q \subset M$ inside $M$.

\section{Intertwining-by-bimodules for arbitrary von Neumann algebras}\label{section-intertwining}

Popa introduced his powerful intertwining-by-bimodules in the case when the ambient von Neumann algebra is tracial \cite{Po01, Po03}. This intertwining-by-bimodules has recently been adapted to the type ${\rm III}$ setting by Houdayer--Isono \cite{HI15}.

We will use the following notation throughout this section. Let $M$ be any $\sigma$-finite von Neumann algebra and $A \subset 1_A M 1_A$ any $B \subset 1_B M 1_B$ any von Neumann subalgebras with expectation. Let $(M, \rL^2(M), J, \rL^2(M)_+)$ be the standard form of $M$. Define $\widetilde B := B \oplus \C 1_B^\perp$ and observe that $\widetilde B \subset M$ is with expectation. Fix a faithful normal conditional expectation $\rE_{\widetilde B} : M \to \widetilde B$. Regard $\rL^2(\widetilde B) \subset \rL^2(M)$ as a closed subspace via the mapping $\rL^2(\widetilde B)_+ \to \rL^2(M)_+ : \xi_\varphi \mapsto \xi_{\varphi \circ \rE_{\widetilde B}}$. 
The {\em Jones projection} $e_{\widetilde B} : \rL^2(M) \to \rL^2(\widetilde B)$ satisfies
\[J 1_B J e_{\widetilde B} = 1_B e_{\widetilde B} = e_{\widetilde B} 1_B = e_{\widetilde B} J 1_B J.\]
We will denote by $\langle M, \widetilde B\rangle := (J\widetilde BJ)' \subset \mathbf B(\rL^2(M))$ the {\em Jones basic construction} and by $\rT : \langle M, \widetilde B\rangle_+ \to \widehat{M}_+$ the canonical faithful normal semifinite operator valued weight which satisfies $\rT(e_{\widetilde B}) = 1$. We refer the reader to \cite{Ha77a, Ha77b} for more information on operator valued weights. 

\begin{df}[{\cite[Definition 4.1]{HI15}}]\label{df-intertwining} 
We say that $A$ {\em embeds with expectation into} $B$ {\em inside} $M$ and write $A \preceq_M B$, if there exist projections $e \in A$ and $f \in B$, a nonzero partial isometry $v \in eMf$ and a unital normal $\ast$-homomorphism $\theta: eAe \to fBf$ such that the inclusion $\theta(eAe) \subset fBf$ is with expectation and $av = v\theta(a)$ for all $a \in eAe$.
\end{df}

We now provide a criterion for $A \preceq_M B$ in terms of (normal) bimodular completely positive maps, that generalizes part of \cite[Theorem 4.3]{HI15}. Note that there is no restriction on the type of any of the algebras involved.

\begin{thm}\label{thm-intertwining}
Keep the same notation as above. The following assertions are equivalent:
\begin{itemize}
\item [$(\rm i)$] $A \preceq_M B$.
\item [$(\rm ii)$] There exists a nonzero element $d \in A' \cap (1_A\langle M, \widetilde B\rangle 1_A)_+$ such that $d \, 1_A J1_B J = d$ and $\rT (d) \in M_+$.
\item [$(\rm iii)$] There exists a normal $A$-$A$-bimodular completely positive map $\Phi : \langle M, \widetilde B\rangle  \to A$ such that $\Phi(1_AJ1_BJ) \neq 0$.
\item [$(\rm iv)$] There exists an $A$-$A$-bimodular completely positive map $\Psi : \langle M, \widetilde B\rangle \to A$ such that $\Psi |_{M}$ is normal and $\Psi |_{1_AM e_{B} M1_A} \neq 0$ where $e_B := e_{\widetilde B} J1_BJ$. 
\end{itemize}
\end{thm}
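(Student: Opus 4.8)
## Proof plan for Theorem~\ref{thm-intertwining}

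The plan is to establish the cycle of implications $(\rm i) \Rightarrow (\rm iii) \Rightarrow (\rm ii) \Rightarrow (\rm i)$ together with the equivalence $(\rm ii) \Leftrightarrow (\rm iv)$, treating $(\rm iii)$ and $(\rm iv)$ as two reformulations (one ``smooth'', one ``concrete'') of the existence of the intertwining element $d$. The conceptual heart is the interplay between the completely positive map $\Phi$ or $\Psi$, the operator valued weight $\rT$, and the element $d \in A' \cap 1_A\langle M,\widetilde B\rangle 1_A$: morally $d$ is obtained by ``disintegrating'' $\Phi$ via $\rT$, and conversely $\Phi$ is reconstructed from $d$ by the formula $\Phi(T) = \rE_A(\rT(d^{1/2} T d^{1/2}))$ after suitable normalization.

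\textbf{$(\rm i) \Rightarrow (\rm iii)$.} Starting from projections $e \in A$, $f \in B$, a partial isometry $v \in eMf$ and $\theta: eAe \to fBf$ with $av = v\theta(a)$, the plan is to manufacture the bimodular c.p.\ map explicitly. Since $v^*v \leq f \leq 1_B$ and $J1_BJ$ commutes with $e_{\widetilde B}$ in the manner recorded above, the element $v e_{\widetilde B} v^* \in \langle M,\widetilde B\rangle$ is a nonzero positive element with $\rT(v e_{\widetilde B} v^*) = v \rT(e_{\widetilde B}) v^* = vv^* \in M_+$. One then defines, for $T \in \langle M,\widetilde B\rangle$,
\[
\Phi_0(T) := e \, \rE_M^{\theta}\!\big( v^* T v \big) e,
\]
where $\rE_M^{\theta}$ pushes back along $\theta$; more robustly, one sets $\Phi(T) = \sum_k u_k^* \, e \, \rT(\,\cdot\,) \, e \, u_k$ after spatially averaging over a family of partial isometries in $M$ tiling the central support $z_M(v^*v)$, exactly as in \cite[Theorem 4.3]{HI15}. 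The $A$-$A$-bimodularity follows from $av = v\theta(a)$ and $\theta(a) \in \widetilde B$ commuting through $e_{\widetilde B}$; normality is inherited from normality of $\rT$ on the relevant corner and from normality of $\rE$; and $\Phi(1_AJ1_BJ) \neq 0$ because $e_{\widetilde B} J1_BJ = e_{\widetilde B}$ forces the ``diagonal'' term $v e_{\widetilde B} v^*$ to survive. This is the step where I borrow most directly from \cite{HI15}; the only new point is checking that the argument survives without a trace, which is exactly what the operator valued weight $\rT$ of \cite{Ha77a, Ha77b} is designed to handle.

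\textbf{$(\rm iii) \Rightarrow (\rm ii)$.} Given a normal $A$-$A$-bimodular c.p.\ map $\Phi : \langle M,\widetilde B\rangle \to A$ with $\Phi(1_AJ1_BJ)\neq 0$, the plan is to represent $\Phi$ on $\rL^2(A)$ (or a cut-down of $\rL^2(M)1_A$) via a Stinespring dilation and extract a positive element of the commutant. Concretely, normality of $\Phi$ gives a bounded positive operator-valued ``density''; bimodularity forces this density to lie in $A' \cap 1_A\langle M,\widetilde B\rangle 1_A$. Here is where $\rT$ re-enters: one wants $d$ with $\rT(d) \in M_+$, i.e.\ $d$ in the definition-domain of the operator valued weight. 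The trick, as in \cite{OP07} and \cite{HI15}, is to first produce $d$ as an unbounded affiliated object and then cut down by a spectral projection $q$ of $\rT(d)$; after replacing $d$ by $q d q$ one gets $\rT(qdq) \leq \|\rT(d)q\|\, 1 \in M_+$ while $d\,1_AJ1_BJ = d$ is preserved because $\Phi$ only ``sees'' the range of $J1_BJ$ — this last equation is exactly the translation of $\Phi(T(1-J1_BJ)) $ contributing nothing, which follows from $\Phi(1_AJ1_BJ)$ being the unit of the picture after normalization together with the Cauchy–Schwarz inequality for c.p.\ maps. The condition $d\,1_AJ1_BJ = d$ encodes that $d$ is supported ``under'' the Jones projection $e_{\widetilde B}$ up to the $J1_BJ$ twist.

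\textbf{$(\rm ii) \Rightarrow (\rm i)$.} This is the reverse of the classical argument, now in the $\sigma$-finite non-tracial setting. Given $d \in A' \cap 1_A\langle M,\widetilde B\rangle 1_A$ with $d\,1_AJ1_BJ = d$ and $\rT(d)\in M_+$, one runs Popa's pull-off: approximate $d$ in the appropriate topology by finite sums $\sum x_i e_{\widetilde B} x_i^*$, use a maximality/separability exhaustion to find a nonzero partial isometry $v$ in the corner with $vv^* \le $ support of $d$ and $v^*v \le 1_B$ such that left multiplication by $A$ corresponds, through $v$, to a $\ast$-homomorphism into $\widetilde B$ — and then observe that the $\C 1_B^\perp$ summand of $\widetilde B$ cannot be hit (since $d\,1_AJ1_BJ = d$ confines everything under $J1_BJ$, hence $v^*v \leq 1_B$ genuinely), so the homomorphism lands in $fBf$ and the inclusion is with expectation because $B \subset M$ is. This reproves \cite[Theorem 4.3(i)]{HI15} and is where one must be careful that ``with expectation'' is propagated — the device $\widetilde B = B \oplus \C 1_B^\perp$ exists precisely to make $e_{\widetilde B}$ available even when $1_B \neq 1_M$.

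\textbf{$(\rm ii) \Leftrightarrow (\rm iv)$.} The subalgebra $1_A M e_B M 1_A \subset 1_A\langle M,\widetilde B\rangle 1_A$ (with $e_B = e_{\widetilde B}J1_BJ$) is a $\sigma$-weakly dense ideal-like subspace on which $\rT$ is semifinite; a c.p.\ map $\Psi$ with $\Psi|_M$ normal and $\Psi|_{1_AMe_BM1_A}\neq 0$ is, after composing with the identity on this dense part and invoking normality only where needed, the same data as a normal $\Phi$ as in $(\rm iii)$ — one passes from $(\rm iv)$ to a genuine $d$ by testing $\Psi$ against $x e_{\widetilde B} x^*$ and from $(\rm ii)$ to $(\rm iv)$ by the formula $\Psi(T) = \lim \rE_A(\sum y_j^* \rT(d^{1/2}Td^{1/2}) y_j)$ restricted to the domain where it converges $\sigma$-weakly. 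The role of $e_B$ versus $e_{\widetilde B}$ is a bookkeeping adjustment: $e_B$ is the ``honest'' Jones projection for $B$ (not $\widetilde B$), so that the non-vanishing in $(\rm iv)$ is stated intrinsically in terms of $B$.

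\textbf{Main obstacle.} I expect the crux to be $(\rm iii) \Rightarrow (\rm ii)$: extracting the positive element $d$ with the domain condition $\rT(d) \in M_+$ from an abstract normal c.p.\ map, in a setting with no trace. In the tracial case one simply uses the basic-construction trace $\Tr$ and the Radon–Nikodym theorem; here one must work with the operator valued weight $\rT$, control its domain via spectral cut-downs, and verify that the cut-down does not destroy either the commutation $d \in A'$ or the support condition $d\,1_AJ1_BJ = d$. Getting these three requirements simultaneously — positivity, affiliation to $A'$, and finiteness under $\rT$ — while only assuming normality of $\Phi$ on $M$ (not on all of $\langle M,\widetilde B\rangle$) is the delicate point, and is precisely why the statement cites \cite{Ha77a, Ha77b, OP07} in addition to \cite{HI15}.
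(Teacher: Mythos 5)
Your road map has the right shape (produce an element $d\in A'\cap 1_A\langle M,\widetilde B\rangle 1_A$ with $\rT(d)\in M_+$ as the pivot between (i) and the completely positive maps), but the two steps that carry the real content of the theorem are either sketched with a mechanism that fails or are missing. First, your treatment of (iv) is a genuine gap. You treat (iv) as ``the same data'' as (iii) up to bookkeeping, and propose to pass from (iv) to $d$ by ``testing $\Psi$ against $xe_{\widetilde B}x^*$''. But in (iv) the map $\Psi$ is only assumed normal on $M$, not on $\langle M,\widetilde B\rangle$, and bimodularity plus normality on $M$ give you no density/Radon--Nikodym handle on the basic construction; this is exactly the situation in the paper's application, where $\Psi$ comes from a non-normal conditional expectation. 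The paper's proof of $(\rm iv)\Rightarrow(\rm iii)$ is a genuinely separate argument, following \cite[Lemma 3.3]{OP08}: one chooses a quasi-central approximate unit $f_\lambda\in\spn(Me_BM)$ for $\rC^*(Me_BM)$ with $0\le f_\lambda\le 1$, sets $\Phi_\lambda:=\Psi(f_\lambda\,\cdot\,f_\lambda)$ (each $\Phi_\lambda$ is normal because $f_\lambda Tf_\lambda$ is dominated by an expression involving only $\rE_{\widetilde B}$ and $\Psi|_M$), and takes a pointwise $\sigma$-weak limit, proving normality of the limit by the estimate $\lim_\lambda\|\varphi\circ\Phi-\varphi\circ\Phi_\lambda\|=0$. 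Without this step the theorem you need for Lemma \ref{vanish} is not proved. Second, in your $(\rm iii)\Rightarrow(\rm ii)$ the proposed cut-down ``by a spectral projection $q$ of $\rT(d)$'' does not work: such spectral projections lie in $M$, and conjugating by them destroys the commutation $d\in A'$; moreover you never explain how normality of $\Phi$ produces the density in $A'\cap 1_A\langle M,\widetilde B\rangle 1_A$ in the absence of a trace. The paper's mechanism is different and is the place where Haagerup's theory enters: one views $T\mapsto\Phi(T\,J1_BJ)$ as a nonzero \emph{bounded} normal operator valued weight into $A$, takes its support projection $p$, and invokes \cite[Theorem 6.6 (ii)]{Ha77b} to conclude that the canonical weight $\rT_p$ is semifinite on $p(A'\cap 1_A\langle M,\widetilde B\rangle 1_A)p$, which directly yields a nonzero $c$ in the relative commutant with $\rT(c)$ bounded (and $c$ sits under $J1_BJ$ because $p$ does).

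Two further points. Your $(\rm ii)\Rightarrow(\rm i)$ is the tracial Popa pull-off (approximation of $d$ by $\sum_i x_ie_{\widetilde B}x_i^*$ and a maximality argument for partial isometries); this does not transfer verbatim to arbitrary $\sigma$-finite, possibly type ${\rm III}$, algebras. The paper instead cuts to a corner $Aq\subset q\langle M,\widetilde B\rangle q$ that is with expectation (\cite[Lemma 2.2]{HI15}), intertwines $Aq$ into $Be_B$ \emph{inside the basic construction}, and then pushes the intertwiner $V^*$ down to $M$ by applying $\rT$, using the domain facts $V^*\in\mathfrak m_{\rT}$ and $V^*=e_B\rT(V^*)$; in particular your claim that the inclusion $\theta(eAe)\subset fBf$ is ``with expectation because $B\subset M$ is'' is not a valid justification -- the expectation is extracted from this basic-construction argument. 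Finally, your direct $(\rm i)\Rightarrow(\rm iii)$ formula $e\,\rE_M^{\theta}(v^*Tv)e$ is not well defined as written; the paper's route is cleaner: set $c:=ve_{\widetilde B}v^*$ (so $\rT(c)=vv^*$ and $cJ1_BJ=c$), spread it into $A'$ by $d:=\sum_n w_ncw_n^*$ over partial isometries $w_n\in A$ with $\sum_n w_nw_n^*=z_A(e)$, and then define $\Phi(x)=\rE_A(\rT(d^{1/2}xd^{1/2}))$. As it stands, the proposal is a plausible outline, but the OP08-type approximation argument for (iv) and the Haagerup operator-valued-weight argument for $(\rm iii)\Rightarrow(\rm ii)$ -- the actual novel content -- are not supplied.
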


\begin{proof}
$(\rm i) \Rightarrow (\rm ii)$ Let $e, f, v, \theta$ be as in Definition \ref{df-intertwining} witnessing that $A \preceq_M B$. Define the element $c := v e_{\widetilde B} v^* \neq 0$. Then $c \in (eAe)' \cap (e \langle M, \widetilde B \rangle e)_+$, and $\rT(c) = v \, \rT(e_{\widetilde B}) \, v^* = vv^* \in M_+$. Moreover, since $v =  vf = v1_B$, we have $J1_BJc = v J1_BJ e_{\widetilde B}v^* = v 1_B e_{\widetilde B}v^* = c$.

Next, choose a countable family of partial isometries $(w_n)_{n \in \N}$ in $A$ such that $w_n^*w_n \leq e$ for every $n \in \N$ and $\sum_{n \in \N} w_n w_n^* = z_A(e)$, where $z_A(e)$ denotes the central support of $e$ in $A$.  We may assume without loss of generality that $w_1 = e$. Put $d := \sum_{n \in \N} w_n c w_n ^* = \sum_{n \in \N} w_n v e_{\widetilde B} v^* w_n^*$. A simple calculation shows that $d \in A' \cap (1_A \langle M, \widetilde B\rangle 1_A)_+$, $d \geq c \neq 0$ and $\rT(d) = \sum_{n \in \N} w_n vv^* w_n^* \in M_+$. Moreover, $dJ1_BJ = d$ since the same holds for $c$, and each $w_n$ commutes with $J1_BJ$.

$(\rm ii) \Rightarrow (\rm i)$  We may choose a suitable nonzero spectral projection $p$ of $d$ such that $p \in A' \cap 1_A \langle M, \widetilde B \rangle 1_A$, $\rT(p) \in M_+$ and $p \, 1_A \, J1_B J = p$. By applying \cite[Lemma 2.2]{HI15}, there exists a nonzero projection $q \in \mathcal Z(A)p$ such that the inclusion $Aq \subset q \langle M, \widetilde B\rangle q$ is with expectation (see also \cite[Theorem 6.6 (iv)]{Ha77b}). Put $e_B := e_{\widetilde B} J1_BJ$ and $r := q \vee e_{B} \in  \langle M, \widetilde B\rangle$. Since $q$ and $e_{ B}$ are $\sigma$-finite projections in $\langle M, \widetilde B\rangle$ so is $r$ in $\langle M, \widetilde B\rangle$ and hence $r\langle M, \widetilde B\rangle r$ is a $\sigma$-finite von Neumann algebra. We then obviously have $q \langle M, \widetilde B\rangle q \preceq_{r\langle M, \widetilde B\rangle r} r\langle M, \widetilde B\rangle r$. Since the central support of $e_{B}$ in $\langle M, \widetilde B\rangle$ is equal to $J1_B J$ and since $q \, J1_B J = q$, the central support of $e_B$ in $r\langle M, \widetilde B\rangle r$ is equal to $r$. Since $e_{B} \, \langle M, \widetilde B\rangle \, e_{B}  = B e_{B}$, we have $q \langle M, \widetilde B\rangle q \preceq_{r\langle M, \widetilde B\rangle r} B e_{B}$ by \cite[Remark 4.5]{HI15}. Since the inclusion $Aq \subset q \langle M, \widetilde B\rangle q$ is with expectation, we finally have $Aq \preceq_{r \langle M, \widetilde B\rangle r} B e_{B}$ by \cite[Lemma 4.8]{HI15}.

Then there exist projections $e \in A$ and $f \in B$, a nonzero partial isometry $V \in eq \, \langle M, \widetilde B\rangle f e_B$ and a unital normal $\ast$-homomorphism $\theta : eAe \to fBf$ such that the unital inclusion $\theta(eAe) \subset fBf$ is with expectation and $a V = V \theta(a)$ for all $a \in A$ (observe that the $\ast$-homomorphism $fBf \to fBf e_B : y \mapsto y  e_B$ is injective). Thus, we have $\theta(a) V^* = V^* a$ for all $a \in A$. We now follow the lines of the proof of \cite[Theorem 4.3 $(6) \Rightarrow (1)$]{HI15} and use the same notation. Since $(V^*)^*V^* \leq q$ and $\rT(q) \in M$, we have $V^* \in \mathfrak n_{\rT}$. Since $ e_B V^* = V^*$ and $ e_B \in \mathfrak n_{\rT}$, we also have that $V^* \in \mathfrak m_{\rT}$. We may apply $\rT$ to the equation $\theta(a) V^* = V^* a$ and we obtain that $\theta(a) \rT(V^*) = \rT(V^*) a$ for all $a \in A$. Since $V^* =  e_B V^* =  e_B \rT( e_B V^*) =  e_B \rT(V^*)$ by \cite[Proposition 2.5]{HI15} and since $V^* \neq 0$, we have $\rT(V^*) \neq 0$. Finally \cite[Remark 4.2 (1)]{HI15} shows that $A \preceq_M B$.

$(\rm ii) \Rightarrow (\rm iii)$ The mapping $\Phi : \langle M, \widetilde B\rangle_+ \to A_+ : x \mapsto \rE_A(\rT(d^{1/2} x d^{1/2}))$ extends to a well-defined normal $A$-$A$-bimodular completely positive map $\Phi : \langle M, \widetilde B\rangle \to A$ such that $\Phi(1_A J1_B J) = \rE_A(\rT(d)) \neq 0$ (see \cite[Lemma 4.5]{Ha77a}).

$(\rm iii) \Rightarrow (ii)$ Define the nonzero normal bounded operator valued weight $\rS : 1_A\langle M, \widetilde B\rangle 1_A \to A : T \mapsto \Phi(T \, J1_B J)$ and denote by $p \in A' \cap 1_A\langle M, \widetilde B\rangle 1_A$ the support projection of $S$. Denote by $z \in \mathcal Z(A)$ the unique central projection such that $Az^\perp = \ker(A \to Ap : a \mapsto ap)$. Observe that $zp = p$ and the $\ast$-homomorphism $Az \to Ap : az \mapsto azp$ is injective. The mapping $\rS_p : p \langle M, \widetilde B\rangle p \to Ap : x \mapsto S(x)p$ is a faithful normal bounded operator valued weight. Likewise, the mapping $\rT_p : p \langle M, \widetilde B\rangle p \to Ap : x \mapsto \rE_A(\rT(x))p$ is a faithful normal semifinite operator valued weight. By \cite[Theorem 6.6 (ii)]{Ha77b}, $\rT_p$ is still semifinite on $p  (A' \cap 1_A\langle M, \widetilde B\rangle 1_A) p$ and hence there exists a nonzero element $c \in p(A' \cap 1_A\langle M, \widetilde B\rangle 1_A)p$ such that $\rE_A(\rT(c))p = \rT_p(c) \in (Ap)_+$. This implies that $\rE_A(\rT(c)) = \rE_A(\rT(cz)) = \rE_A(\rT(c))z \in A_+$. Thus, there exists a nonzero element $d \in A' \cap (1_A\langle M, \widetilde B\rangle 1_A)_+$ such that $\rT(d) \in M_+$.

$(\rm iii) \Rightarrow (\rm iv)$ This implication is obvious.

$(\rm iv) \Rightarrow (\rm iii)$ Put $e_B := e_{\widetilde B} J1_BJ = e_{\widetilde B}1_B$. Denote by $\Lambda$ the set of triplets $(\varepsilon, \mathcal F, \mathcal G)$ where $0 < \varepsilon < 1$, $\mathcal F \subset \rC^*(M e_{B} M)$ is a nonempty finite subset such that $\mathcal F = \mathcal F^*$ and $\mathcal G \subset M$ is a nonempty finite subset. Define the order relation $\leq$ on $\Lambda$ by
$$(\varepsilon_1, \mathcal F_1, \mathcal G_1) \leq (\varepsilon_2, \mathcal F_2, \mathcal G_2) \quad \text{if and only if} \quad \varepsilon_2 \leq \varepsilon_1, \, \mathcal F_1 \subset \mathcal F_2, \, \mathcal G_1 \subset \mathcal G_2.$$
Then $(\Lambda, \leq)$ is a directed set. Following the lines of the proof of \cite[Lemma 3.3]{OP08}, since $M$ lies in the multiplier algebra of $\rC^*(M e_{B} M)$ inside $\mathbf B(\rL^2(M))$ and  using \cite[Proposition I.9.16]{Da96}, for every $\lambda = (\varepsilon, \mathcal F, \mathcal G) \in \Lambda$, we may choose $g_\lambda \in \rC^*(M e_{B} M)$ such that $0 \leq g_\lambda \leq 1$, $\|g_\lambda x - x\| < \varepsilon$ for every $x \in \mathcal F$ and $\|g_\lambda y - y g_\lambda\| < \varepsilon$ for every $y \in \mathcal G$. Since $\spn (M e_{B} M)$ is dense in $\rC^*(M e_{B} M)$, for every $\lambda = (\varepsilon, \mathcal F, \mathcal G) \in \Lambda$, we may find an element $h_\lambda \in \spn (M e_{B} M)$ such that $\|h_\lambda - g_\lambda^{1/2} \| < \varepsilon$ and $\Vert h_\lambda \Vert \leq 1$. For every $\lambda = (\varepsilon, \mathcal F, \mathcal G) \in \Lambda$, the element $f_\lambda := h_\lambda^*h_\lambda$ belongs to $\spn(M e_B M)$ and satisfies $0 \leq f_\lambda \leq 1$ and
\[\|f_\lambda - g_\lambda\| \leq \|h_\lambda^*h_\lambda - h^*_\lambda g_\lambda^{1/2} \| + \|h_\lambda^*g_\lambda^{1/2} - g_\lambda\|  \leq 2 \varepsilon.\]
In particular, we have that  $\lim_{\lambda} \|f_\lambda x - x \| = 0$ for every $x \in \rC^*(M e_B M)$ and $\lim_{\lambda} \|f_\lambda y - y f_\lambda\| = 0$ for every $y \in M$.

Define the completely positive map $\Phi_\lambda : \langle M, \widetilde B\rangle \to A : T \mapsto \Psi(f_\lambda T f_\lambda)$ and denote by $\Phi$ a pointwise $\sigma$-weak limit of $(\Phi_\lambda)_{\lambda \in \Lambda}$. Namely, fix a cofinal ultrafilter $\mathcal U$ on the directed set $\Lambda$ and define $\Phi(T) = \sigma\text{-weak} \lim_{\lambda \to \cU} \Phi_\lambda(T)$ for every $T \in \langle M,\widetilde B\rangle$. From the properties of $(f_\lambda)$, we see that $\Phi |_{\rC^*(M e_{B} M)} = \Psi |_{\rC^*(M e_{B} M)}$ and $\Phi$ is an $A$-$A$-bimodular completely positive map. Moreover,
\[ \Phi |_{1_AJ1_BJ \, 1_AMe_BM1_A} =  \Phi |_{1_AMe_BM1_A} = \Psi |_{1_AMe_BM1_A} \neq 0.\]
Using the multiplicative domain of $\Phi$, we have that $\Phi(1_AJ1_BJ) \neq 0$. Our task is now to show that $\Phi$ is normal.

Firstly, we claim that $\Phi_\lambda$ is normal for every $\lambda \in \Lambda$. Indeed, put $f_\lambda = \sum_{i = 1}^k x_i e_{B} y_i$ for some $x_1, \dots, x_n, y_1, \dots, y_n \in M$. For every $T \in (1_A\langle M, \widetilde B\rangle1_A)_+$, since $[\rE_{\widetilde B}(x_i^* T x_j)]_{i, j = 1}^k \in \mathbf M_k(\widetilde B)_+$ commutes with $\diag(e_{B}, \dots, e_{B}) = \diag(e_{\widetilde B} J1_BJ, \dots, e_{\widetilde B}J1_BJ)$, we have
$$f_\lambda T f_\lambda = f_\lambda^* T f_\lambda = \sum_{i, j = 1}^k y_i^*  \rE_{\widetilde B}(x_i^* T x_j)e_{B}  y_j  \leq \sum_{i, j = 1}^k y_i^*  \rE_{\widetilde B}((x_i1_B)^* \, T \, (x_j 1_B)) y_j.$$
Since $\Phi$ is completely positive, we have
$$\Phi_\lambda(T) = \Psi(f_\lambda T f_\lambda) \leq \Psi |_M \left ( \sum_{i, j = 1}^k y_i^* \, \rE_{\widetilde B}((x_i1_B)^* \, T \, (x_j1_B)) \, y_j\right ).$$
Since $\Psi|_M$ is normal, this implies that $\Phi_\lambda$ is normal for every $\lambda \in \Lambda$. 

In order to deduce that $\Phi$ is normal, take $\varphi \in (A_\ast)_+$. Since $\Phi$ and $\Psi$ coincide on $\rC^*(M e_{B} M)$ and since $f_\lambda \in \spn (M e_{B} M)$ and $0 \leq f_\lambda \leq 1$, we have 
\[(\varphi \circ \Phi)(1) \geq \lim_{\lambda \to \mathcal U} (\varphi \circ \Phi)(f_\lambda) = \lim_{\lambda \to \mathcal U} (\varphi \circ \Psi)(f_\lambda) \geq \lim_{\lambda \to \mathcal U} (\varphi \circ \Psi)(f_\lambda^2) = (\varphi \circ \Phi)(1).\]

Cauchy--Schwarz inequality implies that 
\begin{align*}
\lim_{\lambda \to \mathcal U} \|\varphi \circ \Phi - (\varphi \circ \Phi)(f_\lambda \, \cdot \, f_\lambda)\| & \leq \lim_{\lambda \to \mathcal U} 2|(\varphi \circ \Phi)((1 - f_\lambda)^2) |^{1/2} \\
& \leq \lim_{\lambda \to \mathcal U} 2|(\varphi \circ \Phi)(1 - f_\lambda) |^{1/2} = 0.
\end{align*}

For every $\lambda \in \Lambda$ and every $T \in \langle M, \widetilde B\rangle$, we have $f_\lambda Tf_\lambda \in \spn(M e_{B} M)$ and hence $\Phi(f_\lambda\,  \cdot \, f_\lambda ) = \Psi(f_\lambda \, \cdot \, f_\lambda) = \Phi_\lambda$. We get that $\lim_{\lambda \to \mathcal U} \|\varphi \circ \Phi - \varphi \circ \Phi_\lambda\| = 0$. Since $\varphi \circ \Phi_\lambda$ is normal for every $\lambda \in \Lambda$, it follows that $\varphi \circ \Phi$ is normal. Since this holds true for every $\varphi \in (A_\ast)_+$, we obtain that $\Phi$ is normal.
\end{proof}

\section{Proof of the main theorem}

Throughout this section, we keep the same notation as in the statement of the main theorem. Write $(M, \rE) = (M_1, \rE_1) \ast_B (M_2, \rE_2)$ for the amalgamated free product von Neumann algebra. Fix a faithful state $\varphi \in M_\ast$ such that $\varphi = \varphi \circ \rE$. Denote by $\mathcal K$  the closure in $\rL^2(M)$ of the linear span of all the elements of the form $x_1 \cdots x_n \xi_\varphi$ where $n \geq 1$ and $x_1 \cdots x_n \in M$ is a reduced word starting with a letter $x_1 \in M_2^\circ$. Using \cite[Section 2]{Ue98}, $\mathcal K$ is naturally endowed with a structure of $B$-$M_1$-bimodule and as $M_1$-$M_1$-bimodules, we have the following isomorphism
$$\rL^2(M) \ominus \rL^2(M_1) \cong \rL^2(M_1) \otimes_B \mathcal K.$$
We will identify $\rL^2(M) \ominus \rL^2(M_1)$ with $\rL^2(M_1) \otimes_B \mathcal K$ and $\mathcal K$ with $\rL^2(B) \otimes_B \mathcal K$ and we will write $P_{\mathcal K} : \rL^2(M) \to \mathcal K$ for the orthogonal projection. Observe that $P_{\mathcal K} \in \langle M, M_1\rangle$ because $\mathcal K$ is invariant under the right action of $M_1$ on $\rL^2(M)$.

\begin{lem}\label{vanish}
Let $\Theta : \langle M, M_1\rangle \to Q \cap M_1$ be any conditional expectation such that $\Theta |_M$ is normal. Then $\Theta(uP_{\mathcal K}u^*) = 0$ for every $u \in \mathcal U(M_1)$. 
\end{lem}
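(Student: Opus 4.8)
The plan is to argue by contradiction: suppose there is a $u \in \mathcal U(M_1)$ with $\Theta(uP_{\mathcal K}u^*) \neq 0$. Since $\Theta$ is $Q\cap M_1$-bimodular (being a conditional expectation onto $Q\cap M_1$) and $u \in M_1$, replacing $\Theta$ by $T \mapsto u^*\Theta(uTu^*)u$ if necessary, we may as well assume $\Theta(P_{\mathcal K}) \neq 0$; here one has to be slightly careful that $u^*(Q\cap M_1)u$ need not equal $Q\cap M_1$, so instead I would directly use that $\Theta(uP_{\mathcal K}u^*)\neq 0$ produces, after compressing by a suitable spectral projection and averaging as in the proof of Theorem~\ref{thm-intertwining}, a nonzero $(Q\cap M_1)$-$(Q\cap M_1)$-bimodular completely positive map into $Q\cap M_1$. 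More precisely, I want to feed the situation into the criterion of Theorem~\ref{thm-intertwining} applied with the ambient algebra $M$, with $A := Q\cap M_1$, and with $B$ itself as the intertwining target, to contradict hypothesis $(\mathrm{ii})$ that $Q\cap M_1 \npreceq_{M_1} B$. The subtlety is that Theorem~\ref{thm-intertwining} and the non-embedding hypothesis live inside $M_1$, whereas $P_{\mathcal K}$ lives in $\langle M, M_1\rangle$, so the first real task is a reduction from $\langle M, M_1\rangle$ down to $\langle M_1, \widetilde B\rangle$.

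The key structural input is the bimodule identification recalled just before the lemma: as $M_1$-$M_1$-bimodules, $\rL^2(M)\ominus \rL^2(M_1) \cong \rL^2(M_1)\otimes_B \mathcal K$, and $P_{\mathcal K}$ corresponds to $e_{\widetilde B}$ tensored with the identity on the coefficient Hilbert space $\mathcal K$ (viewed as $\rL^2(B)\otimes_B\mathcal K$). Concretely, there is a normal, $M_1$-bimodular, completely positive (indeed trace-scaling-type) map $\langle M, M_1\rangle \ni T \mapsto (\mathrm{id}\otimes \rho)(T) \in \langle M_1, \widetilde B\rangle$ obtained by ``taking the $\mathcal K$-coefficient'', under which $P_{\mathcal K}$ maps to a positive multiple of the Jones projection $e_{\widetilde B}$ associated to $\widetilde B\subset M_1$ — alternatively, and perhaps more cleanly, one observes that $P_{\mathcal K}\, \mathbf B(\rL^2(M))\, P_{\mathcal K}$ contains a copy of $\langle M_1,\widetilde B\rangle$-like data because $\mathcal K$ is a $B$-$M_1$-bimodule with the left $B$-action given by a multiple of the standard one. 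Composing $\Theta$ with an appropriate $M_1$-bimodular unital completely positive section of this picture, and using that $\Theta(uP_{\mathcal K}u^*)\neq 0$ together with $u\in\mathcal U(M_1)$, yields a nonzero normal $A$-$A$-bimodular completely positive map $\langle M_1,\widetilde B\rangle \to A$ that is nonzero on $1_A M_1 e_{\widetilde B} M_1 1_A$ (equivalently nonzero on $1_A J 1_B J 1_A$ after passing through the $e_B = e_{\widetilde B}J1_BJ$ bookkeeping). By the equivalence $(\mathrm{iv})\Rightarrow(\mathrm{i})$ of Theorem~\ref{thm-intertwining} this forces $A = Q\cap M_1 \preceq_{M_1} B$, contradicting $(\mathrm{ii})$.

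The main obstacle, and the step I would spend the most care on, is making precise the passage from a completely positive map out of $\langle M, M_1\rangle$ that is nonzero on $P_{\mathcal K}$ to a completely positive map out of $\langle M_1,\widetilde B\rangle$ that is nonzero on the Jones projection: one must check that this ``restriction to the $\mathcal K$-coefficient'' is genuinely normal on $M_1$, is $A$-$A$-bimodular, and really does send $P_{\mathcal K}$ to something supported on $M_1 e_{\widetilde B} M_1$ rather than being killed. The cleanest route is probably to exhibit an explicit normal $\ast$-homomorphism (or unital normal completely positive map) $\pi : \langle M_1,\widetilde B\rangle \to P_{\mathcal K}\langle M, M_1\rangle P_{\mathcal K}$ intertwining $e_{\widetilde B}$ with a corner of $P_{\mathcal K}$ and $M_1$ with $M_1 P_{\mathcal K}$, using the bimodule isomorphism above, and then set $\Psi := \Theta(\pi(\,\cdot\,))$ — one verifies $\Psi|_{M_1}$ is normal since $\Theta|_M$ is and $\pi$ is normal, $\Psi$ is $A$-bimodular since both $\Theta$ and $\pi$ are (as $A\subset M_1$), and $\Psi$ is nonzero on the relevant Jones-projection subspace since $\Theta(uP_{\mathcal K}u^*)\neq 0$ and $uP_{\mathcal K}u^* = \pi(u e_{\widetilde B}u^*)$ lands in the image. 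Once that dictionary is set up, the contradiction with $Q\cap M_1\npreceq_{M_1}B$ via Theorem~\ref{thm-intertwining}$(\mathrm{iv})\Rightarrow(\mathrm{i})$ is immediate. (Note: nowhere here do I use relative amenability of $Q$ or even that $Q$ exists beyond $Q\cap M_1$ — the relative amenability hypothesis will be used elsewhere in the proof of the main theorem, not in this lemma.)
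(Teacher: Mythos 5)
Your ``cleanest route'' is essentially the paper's own proof: use the $M_1$-$M_1$-bimodule identification $\rL^2(M)\ominus\rL^2(M_1)\cong\rL^2(M_1)\otimes_B\mathcal K$ to produce a normal $\ast$-representation $\pi$ of the basic construction $\langle M_1,B\rangle$ into $\langle M,M_1\rangle$ with $\pi(e_B)=P_{\mathcal K}$ and $\pi(x)=xe_{M_1}^\perp$ for $x\in M_1$, set $\Psi:=\Theta\circ\pi$, and invoke Theorem~\ref{thm-intertwining}~$(\rm iv)$ against $Q\cap M_1\npreceq_{M_1}B$ to force $\Psi|_{M_1e_BM_1}=0$, hence $\Theta(uP_{\mathcal K}u^*)=\Psi(ue_Bu^*)=0$ (the paper argues directly, for all $u$ at once, rather than by contradiction, but that is only a cosmetic difference).

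Two points in your write-up need repair. First, the codomain of $\pi$ cannot be the corner $P_{\mathcal K}\langle M,M_1\rangle P_{\mathcal K}$: the left $M_1$-action does not preserve $\mathcal K$ (only $B$ acts on the left of $\mathcal K$), so the representation must be taken on all of $\rL^2(M)\ominus\rL^2(M_1)$, i.e.\ into $e_{M_1}^\perp\langle M,M_1\rangle e_{M_1}^\perp$, with $e_B\mapsto P_{\mathcal K}$ and $x\mapsto xe_{M_1}^\perp$; your alternative ``take the $\mathcal K$-coefficient'' map in the opposite direction is likewise not available in general and fortunately you do not rely on it. Second, normality of $\Psi|_{M_1}$ does \emph{not} follow formally from ``$\Theta|_M$ normal and $\pi$ normal'', because $\pi(M_1)=M_1e_{M_1}^\perp\not\subset M$ and $\Theta$ is only assumed normal on $M$; the paper fills this in with a short Cauchy--Schwarz estimate, namely $|(\psi\circ\Theta)(e_{M_1}^\perp(x-x_i))|\le\|e_{M_1}^\perp\|_{\psi\circ\Theta}\,\|x-x_i\|_{\psi\circ\Theta}$ for $\psi\in(Q\cap M_1)_\ast^+$, which shows $\psi\circ\Psi$ is $\sigma$-strongly continuous on bounded nets. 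With these two adjustments your argument coincides with the paper's proof.
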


\begin{proof}
Denote by $(M_1, \rL^2(M_1), J^{M_1}, \rL^2(M_1)_+)$ the standard form of $M_1$. Since $\langle M_1,B \rangle = (J^{M_1}BJ^{M_1})' \cap \mathbf B(\rL^2(M_1))$, the Hilbert space $\rL^2(M_1)$ is a $\langle M_1,B \rangle$-$B$-bimodule. Thus, the Hilbert space $\rL^2(M) \ominus \rL^2(M_1) = \rL^2(M_1) \otimes_B \mathcal K$ is naturally endowed with a structure of $\langle M_1, B\rangle$-$M_1$-bimodule. We denote by $\pi_0 : \langle M_1,B \rangle \to \mathbf B(\rL^2(M) \ominus \rL^2(M_1))$ the unital faithful normal $\ast$-representation arising from the left action of $\langle M_1,B \rangle$ on $\rL^2(M) \ominus \rL^2(M_1)$. Using the identification
$$\mathbf B(\rL^2(M) \ominus \rL^2(M_1)) \cong e_{M_1}^\perp \mathbf B(\rL^2(M))e_{M_1}^\perp$$
and precomposing with $\pi_0$, we obtain a nonunital normal $\ast$-representation $\pi : \langle M_1,B \rangle \to \mathbf B(\rL^2(M))$ such that:
\begin{itemize}
\item the range of $\pi$ is contained in $\langle M,M_1\rangle$,
\item $\pi(e_B) = P_{\mathcal K}$ and
\item $\pi(x) = x e_{M_1}^\perp$ for every $x \in M_1 \subset M$.
\end{itemize}
It follows that $\Psi := \Theta \circ \pi : \langle M_1,B\rangle \to Q \cap M_1$ is a $(Q \cap M_1)$-$(Q \cap M_1)$-bimodular completely positive map. We claim that $\Psi$ is normal on $M_1$. Indeed, let $\psi \in (Q \cap M_1)_\ast$ be any positive linear functional. Let $(x_i)_{i \in I}$ be any net in $M_1$ such that $x_i \to x$ $\sigma$-strongly as $i \to \infty$. By Cauchy--Schwarz inequality applied to $\psi \circ \Theta$, we have
\begin{align*}
|(\psi \circ \Psi) (x - x_i)| & = |(\psi \circ \Theta \circ \pi)(x - x_i)| \\
& = |(\psi \circ \Theta)(e_{M_1}^\perp(x - x_i))| \\
&\leq \|e_{M_1}^\perp\|_{\psi \circ \Theta} \, \|x - x_i\|_{\psi \circ \Theta}  \to 0 \quad \text{as } i \to \infty.
\end{align*}
This shows that $\psi \circ \Psi$ is $\sigma$-strongly continuous on $M_1$ and hence $\psi \circ \Psi$ is normal on $M_1$. Since this holds true for every positive linear functional $\psi \in (Q \cap M_1)_\ast$, we infer that $\Psi$ is normal on~$M_1$. 

Since $Q \cap M_1 \npreceq_{M_1} B$, Theorem \ref{thm-intertwining} $(\rm iv)$ implies that $\Psi |_{M_1e_BM_1} = 0$. In particular, for every $u \in \mathcal U(M_1)$, we obtain that 
$$\Theta(uP_{\mathcal K}u^*) = \Theta (u e_{M_1}^\perp \, P_{\mathcal K} \, (u e_{M_1}^\perp)^*)= \Theta(\pi (u) \, \pi(e_B) \, \pi(u^*)) =  \Theta(\pi (u e_B u^*)) = \Psi (ue_Bu^*) = 0.$$
This finishes the proof of the lemma.
\end{proof}

\begin{proof}[Proof of the main theorem]
Since $Q$ is amenable relative to $M_1$ inside $M$ and since $Q \subset M$ is with expectation, there exists a conditional expectation $\Phi : \langle M, M_1\rangle \to Q$ such that $\Phi |_M$ is faithful and normal. 

{\bf Claim.} We have $\Phi(x) = \Phi( \rE_{M_1}(x))$ for every $x \in M$. 

Indeed, fix a faithful normal conditional expectation $\rF : M \to Q \cap M_1$ and put $\Theta := \rF \circ \Phi : \langle M, M_1\rangle \to Q \cap M_1$. Observe that $\Theta$ is a conditional expectation such that $\Theta|_M$ is normal. By Lemma \ref{vanish}, since $Q \cap M_1 \npreceq_{M_1} B$, we have $\rF(\Phi(uP_{\mathcal K} u^*)) = \Theta(uP_{\mathcal K}u^*) = 0$ for every $u \in \cU(M_1)$. Since $\rF$ is faithful, we obtain $\Phi(uP_{\mathcal K}u^*)=0$ for every $u \in \mathcal U(M_1)$. In particular, the projection $1 - uP_{\mathcal K}u^*$ belongs to the multiplicative domain of $\Phi$ for every $u \in \mathcal U(M_1)$.

It suffices to prove the claim for $x$ being a word of the form $x = u \, x_1 \cdots x_p \, v \in M \ominus M_1$ where $p\geq 1$, $x_1 \cdots x_p$ is a reduced word starting and ending with letters $x_1,x_p \in M_2^\circ$ and $u,v \in \cU(M_1)$. Indeed, firstly observe that $\Phi(x) = \Phi( \rE_{M_1}(x))$ for every $x \in M_1$. Secondly, the linear span of such words as above is $\sigma$-strongly dense in $M \ominus M_1$ and $\Phi|_M$ and $\Phi \circ \rE_{M_1}$ are both normal on $M$.

Fix such a word $x = u \, x_1 \cdots x_p \, v \in M \ominus M_1$ as above. We will prove that $\Phi(x) = 0$. Observe that the subspace $\rL^2(M) \ominus \mathcal K$ is the closure of the linear span of the elements $y_1 \xi_\varphi$ with $y_1 \in M_1$ and $y_1 \cdots y_n \xi_\varphi$ where $n \geq 2$ and $y_1 \cdots y_n$ is a reduced word in $M$ starting with a letter $y_1 \in M_1^\circ$. Since $x_1 \cdots x_p$ is a reduced word starting and ending with letters $x_1,x_p \in M_2^\circ$, we obtain
\[ x_1 \cdots x_p \, (1-P_{\mathcal K}) = P_{\mathcal K} \, x_1 \cdots x_p \, (1-P_{\mathcal K}),\]
from which we infer the equality
\[(1 - u P_{\mathcal K} u^*) \, x \, (1 - v^* P_{\mathcal K} v) = u \, (1-P_{\mathcal K}) \, x_1 \cdots x_p \, (1-P_{\mathcal K}) \, v = 0.\]
Applying $\Phi$ to this equality and using the fact that $1 - u P_{\mathcal K} u^*$ and $1 - v^* P_{\mathcal K} v$ belong to the multiplicative domain of $\Phi$, we obtain
\[\Phi(x) = \Phi(1 - u P_{\mathcal K} u^*) \, \Phi(x) \,\Phi(1 - v^* P_{\mathcal K} v) = \Phi((1 - u P_{\mathcal K} u^*)\, x \, (1 - v^* P_{\mathcal K} v)) =  0.\]
This finishes the proof of the claim.

Now fix a faithful state $\psi \in M_\ast$ such that $\psi = \psi \circ (\Phi |_M)$. The above claim shows that $\psi = \psi \circ \rE_{M_1}$. For every $x \in Q$, we have
\[ \Vert x \Vert_\psi =  \Vert \Phi(x) \Vert_\psi = \Vert \Phi \circ \rE_{M_1}(x) \Vert_\psi \leq \Vert \rE_{M_1}(x) \Vert_\psi \leq  \Vert x \Vert_\psi.\]
This shows that $\Vert \rE_{M_1}(x) \Vert_\psi = \Vert x \Vert_\psi$ and hence $\Vert x - \rE_{M_1}(x)\Vert_\psi^2 = \Vert x \Vert_\psi^2  - \Vert \rE_{M_1}(x)\Vert_\psi^2 = 0$. We conclude that $x = \rE_{M_1}(x) \in M_1$ for every $x \in Q$, that is, $Q \subset M_1$.
\end{proof}

\begin{rem}
Let us mention that our main theorem yields an analogous result for HNN extensions of von Neumann algebras. To avoid technicalities, we only formulate it in the tracial setting. Following \cite[Section 2]{Ue07}, for any inclusion of tracial von Neumann algebras $N \subset M$ and any trace preserving embedding $\theta: N \hookrightarrow M$, denote by $\HNN( M, N, \theta)$ the corresponding HNN extension. Using our main theorem and \cite[Proposition 3.1]{Ue07}, we can show that for any  von Neumann subalgebra $Q \subset \HNN( M, N, \theta)$ which is amenable relative to $M$ inside $\HNN(M, N , \theta)$ and such that $Q \cap M \npreceq_{M} N$, we have  $Q \subset M$.
\end{rem}

\begin{rem}
Recall that when a probability measure preserving equivalence relation $\cR$ defined on a standard probability space $(X , \mu)$ splits as an amalgamated free product $\cR_1 \ast_{\cR_0} \cR_2$ in the sense of \cite[D\'efinition IV.6]{Ga99}, the associated von Neumann algebra satisfies $\rL(\cR) = \rL(\cR_1) \ast_{\rL(\cR_0)} \rL(\cR_2)$. Hence our main theorem shows any amenable subequivalence relation of $\cR$ that has a sufficiently large intersection with $\mathcal R_1$ must be contained in $\mathcal R_1$. In the case when the amalgam $\cR_0$ is the trivial relation, such a result follows from \cite[Th\'eor\`eme 1]{Al09}. However, our result is more general as it applies to arbitrary amalgams $\mathcal R_0$.
\end{rem}


\bibliographystyle{plain}

\end{document}